\documentclass[12pt]{article}

\usepackage{epsfig}
\usepackage{latexsym}
\usepackage{caption}
\usepackage{amsmath}
\usepackage{amsfonts}
\usepackage{amssymb}
\usepackage{graphicx}

\usepackage{color}

\setlength{\textheight}{8.5in} \setlength{\textwidth}{6.2in}
\setlength{\oddsidemargin}{0in} \setlength{\parindent}{1em}

\makeatletter

\newcommand{\Rmnum}[1]{\expandafter\@slowromancap\romannumeral #1@}
\makeatother

\begin{document}

\newtheorem{theorem}{Theorem}[section]
\newtheorem{observation}[theorem]{Observation}
\newtheorem{corollary}[theorem]{Corollary}
\newtheorem{algorithm}[theorem]{Algorithm}
\newtheorem{definition}[theorem]{Definition}
\newtheorem{guess}[theorem]{Conjecture}
\newtheorem{claim}[theorem]{Claim}
\newtheorem{problem}[theorem]{Problem}
\newtheorem{question}[theorem]{Question}
\newtheorem{lemma}[theorem]{Lemma}
\newtheorem{proposition}[theorem]{Proposition}
\newtheorem{fact}[theorem]{Fact}

\makeatletter
  \newcommand\figcaption{\def\@captype{figure}\caption}
  \newcommand\tabcaption{\def\@captype{table}\caption}
\makeatother

\newtheorem{acknowledgement}[theorem]{Acknowledgement}

\newtheorem{axiom}[theorem]{Axiom}
\newtheorem{case}[theorem]{Case}
\newtheorem{conclusion}[theorem]{Conclusion}
\newtheorem{condition}[theorem]{Condition}
\newtheorem{conjecture}[theorem]{Conjecture}
\newtheorem{criterion}[theorem]{Criterion}
\newtheorem{example}[theorem]{Example}
\newtheorem{exercise}[theorem]{Exercise}
\newtheorem{notation}[theorem]{Notation}
\newtheorem{solution}[theorem]{Solution}
\newtheorem{summary}[theorem]{Summary}

\newenvironment{proof}{\noindent {\bf
Proof.}}{\rule{3mm}{3mm}\par\medskip}
\newcommand{\remark}{\medskip\par\noindent {\bf Remark.~~}}
\newcommand{\pp}{{\it p.}}
\newcommand{\de}{\em}
\newcommand{\mad}{\rm mad}



\title{Anti-magic labeling of  regular graphs}

\author{
    Feihuang Chang\thanks{Grant number: NSC 102-2115-M-003-008-}\\
   Division of Preparatory Programs
   for  Overseas Chinese Students\\
    Taiwan Normal University\\
    {\tt feihuang0228@gmail.com}
    \and
   Yuchang Liang  \\
   Department of  Applied Mathematics\\
   National Sun Yat-sen University\\
   Taiwan\
   {\tt chase2369219@hotmail.com}
\and
   Zhishi Pan\thanks{Grant number:NSC 102-2115-M-032 -006 -} \\
   Department of mathematics\\
   Tamkang University\\
    Taiwan\\
   {\tt zhishi.pan@gmail.com}
\and
   Xuding Zhu
   \thanks{Grant number: NSF11171310.} \\
   Department of   Mathematics\\
   Zhejiang Normal University\\
    Jinhua, China\\
   {\tt xudingzhu@gmail.com}
}

\maketitle

\begin{abstract}
A graph $G=(V,E)$ is antimagic if there is a one-to-one correspondence $f: E \to \{1,2,\ldots, |E|\}$ such that
for any two vertices $u,v$, $\sum_{e \in E(u)}f(e) \ne \sum_{e\in E(v)}f(e)$.
It is known that bipartite regular graphs are antimagic and non-bipartite regular graphs of odd degree at least three are antimagic.
Whether all non-bipartite regular graphs of even degree are antimagic remained an open problem. In
this paper, we solve this problem and prove that all even degree regular graphs  are antimagic.
\end{abstract}

Keywords: Anti-magic, regular graph,  labeling.

\section{Introduction}

 Suppose  $G=(V,E)$ is a graph and   $f: E \to \{1,2,\ldots,
 |E|\}$ is a bijective mapping. For each vertex $u$ of $G$, the {\em vertex-sum} $\varphi_f(u)$ at $u$ is
 defined as  $\varphi_f(u)=\sum_{e
 \in E(u)}f(e)$, where $E(u)$ is the set
 of edges incident to $u$.

 If $\varphi_f(u) \ne \varphi_f(v)$ for any two
 distinct vertices $u,v$ of $G$, then $f$ is called   an {\em antimagic labeling} of $G$.
 A graph $G$ is called {\em antimagic} if $G$ has an antimagic
 labeling. The problem of antimagic labeling of graphs was introduced by Hartsfield and Ringel \cite{HR1990}.
 They put forth two
 conjectures concerning antimagic labeling of graphs.

\begin{guess} [\cite{HR1990}]
\label{g1} Every   connected graph other than $K_2$ is
 antimagic.
 \end{guess}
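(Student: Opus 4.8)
The plan is to prove the conjecture by a degree-stratified labeling combined with a reserved-window argument, using the regular-graph labelings of this paper as building blocks on the dense strata and a dominating-edge scheme on the sparse ones. First I would dispose of the trivial reductions: a connected graph other than $K_2$ has $n\ge 3$ vertices and $m\ge 2$ edges, and it suffices to produce a bijection $f:E\to\{1,\dots,m\}$ making all vertex-sums distinct. Partition the vertices into degree classes $V_d=\{v:\deg_G(v)=d\}$. Since a vertex of degree $d$ obeys $\binom{d+1}{2}\le \varphi_f(v)\le dm-\binom{d}{2}$, vertices lying in classes of very different degree are automatically separated once the small labels are steered toward low-degree vertices and the large labels toward high-degree ones; the entire difficulty is to separate vertices \emph{within} a common degree class.

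For the within-class separation I would process the classes in decreasing order of degree, reserving for each class a window of consecutive target sums disjoint from the others. For a class $V_d$ with $k$ vertices I would choose a system of distinct ``signature'' edges, one per vertex, forming a matching or a spanning forest of $G[V_d]$ (extended into $G$ when $G[V_d]$ is too sparse), and place the currently largest unused labels on these signature edges in an order $v_1,\dots,v_k$ so that the induced partial sums increase strictly along the order. The remaining, smaller labels incident to $V_d$ then contribute only a bounded perturbation; as long as the gaps between consecutive signature labels exceed the maximum perturbation, namely $(d-1)$ times the largest remaining label, distinctness inside $V_d$ is forced. On the dense strata, where many vertices share a common large degree, I would instead invoke the regular-graph labelings established in this paper on a regularized core obtained by temporarily padding degrees, and then correct the padding.

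The hard part will be the genuinely sparse, degree-uniform cases --- most notably graphs built almost entirely from degree-$2$ vertices, such as long subdivisions and trees with many degree-$2$ vertices. Here no vertex has enough edges for a signature label to dominate the perturbation from its neighbours, and the reserved windows of equal-degree vertices are forced to overlap, so the monotone-signature scheme collapses. For these cases I would switch to an algebraic attack: regard the labels as variables and consider the polynomial $P=\prod_{u\ne v}(\varphi(u)-\varphi(v))$, seeking via the Combinatorial Nullstellensatz a monomial of full degree with nonzero coefficient, which would yield an injective assignment of distinct integers to the edges. The true obstacle --- and the reason the conjecture is still open --- is that the polynomial method delivers distinct labels drawn from a sufficiently large ground set, not a \emph{bijection} onto the consecutive set $\{1,\dots,m\}$. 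Reconciling the rigidity of the exact label multiset with the flexibility needed to guarantee distinctness on these sparse graphs is precisely where I expect the main work, and some new global parity or counting invariant seems necessary to bridge that gap.
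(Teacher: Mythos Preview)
The statement you are attempting to prove is the Hartsfield--Ringel conjecture, which is an \emph{open problem}: the paper does not prove it and says explicitly that both conjectures ``remain open.'' The paper's actual contribution is the far more restricted result that $(2k+2)$-regular graphs are antimagic, so there is no ``paper's own proof'' of this statement to compare against.

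Independently of that, your proposal is not a proof. You correctly isolate the sparse, near-$2$-regular case as the crux, and then concede that the Combinatorial Nullstellensatz only yields distinct labels from a large ground set rather than a bijection onto $\{1,\dots,m\}$, and that ``some new global parity or counting invariant seems necessary to bridge that gap.'' That sentence is an acknowledgement of a gap, not a closure of one. The earlier signature-edge step is also broken as stated: you require the gaps between consecutive signature labels to exceed $(d-1)$ times the largest remaining label, but consecutive integers differ by $1$ while the perturbation bound is of order $dm$, so the inequality fails for every $d\ge 2$ on any graph of nontrivial size. The ``regularized core obtained by temporarily padding degrees'' is likewise only a gesture: adding and then removing edges changes all the vertex-sums in an uncontrolled way, and nothing in the paper's regular-graph argument survives that correction.

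In short, the statement is a famous unsolved conjecture, the paper makes no claim to have proved it, and your sketch locates but does not resolve the central obstruction.
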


\begin{guess} [\cite{HR1990}]
\label{g2} Every  tree other than $K_2$ is
 antimagic.
 \end{guess}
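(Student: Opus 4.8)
The plan is to prove Conjecture~\ref{g2} by an explicit labeling organised around the one genuine source of difficulty in trees, namely the vertices of degree $2$. Root $T$ at a leaf $r$. Then the $n-1$ edges of $T$ are in bijection with the non-root vertices, $v\mapsto e_v$ being the edge joining $v$ to its parent, so a labeling is the same as a bijection $v\mapsto a_v$ from the non-root vertices onto $\{1,\dots,n-1\}$. Writing $S_v=\sum_w a_w$ for the sum over the children $w$ of $v$, the vertex-sums become $\varphi_f(v)=a_v+S_v$ for $v\ne r$, and $\varphi_f(r)=a_c$ for the unique child $c$ of $r$. In this language every leaf is a childless non-root vertex with sum $a_v$, and the root also has a single-label sum $a_c$; since the $a_v$ are distinct, all of these single-label sums are automatically pairwise distinct. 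The whole problem is therefore to keep the remaining internal sums apart from one another and from the single-label sums.

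First I would settle the case in which $T$ has no vertex of degree $2$, a series-reduced tree. There every internal vertex has degree at least $3$, hence at least two children, so each internal sum $a_v+S_v$ is a sum of at least three labels. Reserving the largest labels for edges incident to internal vertices pushes every internal sum above the range $[1,n-1]$ occupied by the single-label sums, and a greedy assignment in order of decreasing depth (deepest vertices receiving the largest labels, so that each parent inherits a large $S_v$) should separate the internal sums from each other; a residual collision between two internal sums is repaired by swapping two labels of nearly equal value, which disturbs only those two sums. In the present notation this recovers the known antimagic result for trees with at most one vertex of degree $2$.

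The general tree would then be reached by induction on the number of degree-$2$ vertices, with edge subdivision as the inductive step. Suppressing a degree-$2$ vertex $v$ (merging its two incident edges into one) produces a tree $T'$ with one fewer degree-$2$ vertex, and the inductive hypothesis supplies an antimagic labeling of $T'$. Re-inserting $v$ splits one edge into two, creates a new pair-sum $\varphi_f(v)=a_v+a_w$, and perturbs the sums at the two endpoints of the original edge, so the task is to reallocate labels along the affected path so that the new pair-sum avoids every existing sum. The base of the induction must include the paths, whose series-reduction is the excluded $K_2$; these are labelled directly by the classical zig-zag construction, which already signals that degree-$2$ vertices demand their own treatment rather than falling out of the reduction.

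I expect the main obstacle to be exactly this last point. For an isolated subdivision the reinsertion is a finite local repair, but along a long chain of degree-$2$ vertices the pair-sums $a_{v_i}+a_{v_{i+1}}$ interact like the consecutive sums in an antimagic labeling of a path, so the chain must be labelled coherently rather than vertex by vertex. More seriously, when degree-$2$ vertices are abundant the clean range-separation that works in the series-reduced case collapses: the many medium-sized pair-sums can collide with one another and with the single-label and high-vertex sums, and a purely greedy argument can be driven into a collision it cannot locally undo. Overcoming this seems to require a global scheduling of the labels—plausibly a Hall-type or exchange argument producing a system of distinct representatives for the required sums simultaneously—and it is this combinatorial core, not the leaf or high-degree cases, that keeps the conjecture open.
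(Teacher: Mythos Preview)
The statement you are addressing is not a theorem of the paper but Conjecture~\ref{g2}, which the paper explicitly records as \emph{open}: ``The conjectures have received much attention, but both conjectures remain open.'' There is therefore no proof in the paper to compare your proposal against, and any claimed proof would settle a well-known open problem.

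Your proposal is, by your own description in its final paragraph, not a proof but a plan with an unclosed gap. You correctly locate the obstruction: the inductive reinsertion of a suppressed degree-$2$ vertex is not a local operation on the labeling. Passing from $T'$ (with $n-2$ edges) to $T$ (with $n-1$ edges) forces the introduction of a new label, so the label set changes globally, and the new pair-sum $a_v+a_w$ together with the perturbed sum at one endpoint must simultaneously avoid all other vertex-sums with essentially no free parameters left. Along a long chain of degree-$2$ vertices these constraints accumulate, and, as you say, a greedy or local-swap argument can be forced into an unrecoverable collision. This is exactly why the conjecture is open; your write-up is an honest diagnosis of the difficulty, not a resolution of it.

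Two smaller points. First, your treatment of the series-reduced base case is only a heuristic: ``should separate'' and ``swapping two labels of nearly equal value'' are not arguments, and a single swap generically disturbs more than two vertex-sums since each label contributes to the sums at both of its endpoints. The result you allude to (trees with at most one vertex of degree~$2$ are antimagic) is indeed known \cite{LWZ2012}, but its proof is not the sketch you give. Second, the base of your induction already fails to be uniform: paths collapse under series-reduction to $K_2$, so they must be handled by an entirely separate argument, which undercuts the claim that the induction organises the proof.
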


The conjectures have received much attention, but both conjectures
remain open. Some special cases of the conjectures are verified \cite{AKLRY2004,survey,KLR2009,Dan2005,LWZ2012,LZ2013}.
Recently, it is shown by Eccles \cite{Ecc2014} that graphs of large linear size are antimagic.
As vertices of the same degree   seem more difficult to receive distinct vertex-sums,
a natural special case of Conjecture \ref{g1} is whether for $k \ge 2$,
every $k$-regular graph is antimagic. Cranston \cite{Cra2009} proved that for $k \ge 2$, every $k$-regular bipartite graph
is antimagic. For non-bipartite regular graphs,
Liang and Zhu \cite{LZ2014} proved that every cubic graph is antimagic, and the result was generalized by Cranston, Liang and Zhu
\cite{CLZ}, where it is proved that odd degree regular graphs are antimagic.
It remains an open problem whether every even degree regular non-bipartite graph is antimagic.
Hartsfield and Ringel \cite{HR1990}
proved that   every $2$-regular graph is
antimagic. This paper solves this problem and
proves that indeed every even degree regular graph is antimagic.

\section{A lemma}
In this section, we prove a   lemma  which will be used in the  proof of the main result. Assume
$G=(X \cup Y, E)$ is a bipartite graph. A {\em $Y$-link} is a path $p=(y,x,y')$ of length $2$ with
$y, y' \in Y$. A {\em  $Y$-link family} is a family $F$ of vertex disjoint $Y$-links. Vertices in
$F$ are said to be {\em covered by $F$}. For simplicity, we shall write a $Y$-link $p=(y,x,y')$ as
$p=yxy'$.

Assume each vertex of $X$ has degree at most $d$.
If $F$ is a $Y$-link family and $M$ is a matching in $G$ and $F \cup M$ covers every   $d$-degree vertex of $X$,
then we call $(F,M)$ a {\em $d$-covering pair} (or a {\em covering pair}, when $d$ is clear from the context).
A {\em  minimum $d$-covering pair} is a $d$-covering pair $(F,M)$ with
   minimum $|F|$.

Note that if $(F,M)$ is a minimum covering pair of $G$, then each  $d$-degree vertex of $X$ is covered by exactly one of $M $ or $F $,
and
if $yxy' \in F$, then both $y,y'$ are incident to $M$-edges (for otherwise we may replace $yxy'$ by a matching edge $xy$ or $xy'$).
Thus each component of the union graph $M  \cup F $ is either a single edge (a matching edge) or a $W$-shape graph (a $Y$-link together with two matching edges).

\begin{lemma}
\label{key} Assume $G=(X \cup Y, E)$ is a bipartite graph,  $d \ge 3$, each vertex in $X$ has degree  at most $d$ and each vertex in $Y$ has degree at most $d+1$.
Then $G$ has a $d$-covering pair.
 \end{lemma}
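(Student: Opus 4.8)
The plan is to reduce the statement to a purely matching-theoretic assertion and then to exploit the degree hypotheses.

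\textbf{Reduction.} Let $X_d$ be the set of $d$-degree vertices of $X$. It suffices to partition $X_d = S_M \cup S_F$ so that (a) $G$ has a matching saturating $S_M$, and (b) for each $x \in S_F$ one can choose two distinct neighbours of $x$ in such a way that all $2|S_F|$ chosen vertices are distinct. Given such a partition, let $M$ be the matching from (a) and let $F$ be the family of $|S_F|$ links produced by (b); these links are pairwise vertex-disjoint, and --- crucially --- the definition of a covering pair does \emph{not} demand that $F$ and $M$ be vertex-disjoint, so it is harmless that $F$ re-uses vertices saturated by $M$. Then $F \cup M$ covers $X_d$, so $(F,M)$ is a $d$-covering pair. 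By Hall's theorem, (a) is equivalent to $|N(R)| \ge |R|$ for all $R \subseteq S_M$, and (b) --- applying Hall to the bipartite graph obtained from $G$ by duplicating every vertex of $X$ --- is equivalent to $|N(R)| \ge 2|R|$ for all $R \subseteq S_F$.

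\textbf{Setting up $S_F$.} Pass to $G' = G[X_d \cup Y]$, in which every $x \in X_d$ has degree exactly $d$ and every $y \in Y$ still has degree at most $d+1$. Take a maximum matching $M_0$ of $G'$, let $U \subseteq X_d$ be the set of vertices it leaves unsaturated, and put $S_M = X_d \setminus U$, $S_F = U$; then (a) holds automatically. Let $A \subseteq X_d$ be the set of vertices reachable from $U$ along $M_0$-alternating paths. Standard matching theory gives $U \subseteq A$ and that $M_0$ restricts to a bijection from $N(A)$ onto $A \setminus U$, so $|N(A)| = |A| - |U|$. Every edge incident to $A$ has its other end in $N(A)$, so $d\,|A| = e(A, N(A)) \le (d+1)\,|N(A)| = (d+1)(|A|-|U|)$, which rearranges to $|A| \ge (d+1)\,|U| \ge 4\,|U|$ since $d \ge 3$. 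Moreover each $y \in N(A)$ is $M_0$-matched into $A \setminus U$, hence has a neighbour outside $U$, so $\deg_U(y) \le d$ for every $y \in N(U)$; this already yields $|N(R)| \ge |R|$ for all $R \subseteq U$.

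\textbf{The crux.} It remains to secure condition (b) for $S_F = U$, namely $|N(R)| \ge 2|R|$ for all $R \subseteq U$. I would choose $M_0$, among all maximum matchings of $G'$, so as to maximise the size of a largest $2$-matching of the resulting unsaturated set $U$. If $U$ were still not $2$-matchable there would be $R \subseteq U$ with $|N(R)| \le 2|R|-1$; then $R$, together with the $M_0$-partners of $N(R)$ and their alternating closure, would form a configuration far too sparse to sit inside $A$ given that $|A| \ge 4|U|$ and that every vertex of $N(A)$ meets $A \setminus U$. The aim is to convert this sparsity into an $M_0$-alternating path from a vertex of $R$ to a saturated vertex of $A$ whose reversal yields another maximum matching whose unsaturated set admits a strictly larger $2$-matching, contradicting the choice of $M_0$.

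Making this exchange precise --- in particular exhibiting a monovariant that the reversal strictly improves, and using the ``$+1$'' slack in the degree bound on $Y$ to push the deficient part of $U$ out into the large set $A$ --- is the step I expect to be the main obstacle, and the hypothesis $d \ge 3$ (rather than merely $d \ge 2$) is exactly what makes it go through. Once $U$ is $2$-matchable, conditions (a) and (b) hold for $(S_M,S_F) = (X_d \setminus U,\, U)$, and the associated matching together with the link family is the required $d$-covering pair.
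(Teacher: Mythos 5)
Your reduction is sound: a $d$-covering pair is exactly a partition of the $d$-degree part of $X$ into a matchable set $S_M$ and a set $S_F$ admitting a ``$2$-matching'' (distinct pairs of neighbours), and the two Hall-type conditions you state are the right reformulations. Your preliminary estimates are also correct ($|N(A)|=|A|-|U|$, hence $|A|\ge (d+1)|U|$, and $\deg_U(y)\le d$ for $y\in N(U)$). But the proof stops exactly where the lemma's content lies: you never establish that some maximum matching leaves an exposed set $U$ with $|N(R)|\ge 2|R|$ for all $R\subseteq U$. The paragraph headed ``The crux'' is a plan, not an argument --- you name an extremal choice of $M_0$, assert that a deficient $R$ would be ``too sparse,'' and state the \emph{aim} of finding an improving alternating exchange, while explicitly conceding that making the exchange precise is the main obstacle. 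As written, the lemma is unproved. I would also flag that the strategy itself may be too rigid: by fixing $S_F$ to be the exposed set of a \emph{maximum} matching you force $|S_F|$ to equal the deficiency, whereas a covering pair is free to place matchable vertices into the link family (and indeed $F$ may share $Y$-vertices with $M$); it is not clear that the optimal partition is ever of your restricted form, so even a correct completion of your exchange argument would have to rule this out.

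For comparison, the paper runs the construction in the opposite order, and that order is what makes the Hall verification tractable. It first chooses the link family $F$ extremally --- maximizing $|Y_F|$, the set of $Y$-vertices adjacent to link-covered $X$-vertices, and subject to that maximizing the auxiliary set $X'_F$ --- and proves via an infinite ``link-sequence'' contradiction that every $y\in Y\setminus Y_F$ has all its neighbours in $X'_F$ (Lemma \ref{lem1}). Only then does it delete the link-covered $X$-vertices and check Hall's condition for an ordinary ($1$-)matching of what remains: for $Z$ in the residual graph with $W=N(Z)$, the maximality of $|Y_F|$ forces the degree-$(d+1)$ vertices of $W$ to have pairwise disjoint neighbourhoods, Lemma \ref{lem1} forces each such neighbour to also meet a low-degree vertex of $W$, and a short count gives $d|Z|\le d|W|$. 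In other words, the paper pays its structural price up front in the choice of $F$ so that the matching step reduces to a routine defect count, whereas your route defers all the difficulty to a $2$-matching condition on the leftover of a maximum matching --- precisely the step you have not supplied.
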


 To prove Lemma \ref{key}, we may assume each vertex of $Y$ has degree $d+1$
 and each vertex of $X$ has degree $d$, for otherwise, let $G' =(X' \cup Y', E')$ be a bipartite graph which contains $G$ as an induced subgraph
 and each vertex of $X'$ has degree $d$ and each vertex of $Y'$ has degree $d+1$   (such a graph $G'$ is easily seen to exist).
 Let $(F',M')$ be a $d$-covering pair of $G'$.
Let  $M = M' \cap E(G)$ and   $F=\{p \in F': p \subset G\}$. Then $(F,M)$ is a $d$-covering pair   of $G$.

For a $Y$-link  family $F$ in $G$, let
\begin{eqnarray*}
X_F =  V(F) \cap X, \ \
Y_F = N_G(X_F).
\end{eqnarray*}
For $1 \le i \le d+1$, let
\begin{eqnarray*}
Y_{F,i} = \{y \in Y_F: |N_G(y) \cap X_F| = i\},\ \
Y_{F, i^+} = \cup_{j=i}^{d+1}Y_{F,i}.
\end{eqnarray*}
Let
$$X'_F = \{x \in X -X_F: N_G(x) \cap Y_{F, 2^+} \ne \emptyset\}.$$

Note that $Y_F$ is the set of vertices of $Y$ that is adjacent to some vertices in $X_F$ (in $G$), and hence $Y_F$ may contain vertices
not covered by $F$.

 We choose $F$ so that
 \begin{itemize}
 \item[(1)]  $|Y_F|$ is maximum.
 \item[(2)]   Subject to (1), $|X'_F|$ is maximum.
 \end{itemize}

 \begin{lemma}
 \label{lem1}
 For any $y \in Y -Y_F$, $N_G(y) \subseteq X'_F$.
 \end{lemma}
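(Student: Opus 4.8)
The plan is to argue by contradiction, re-routing the link family $F$ so as to violate one of the two extremal conditions $(1)$, $(2)$ used to choose $F$. Suppose $y\in Y-Y_F$ and $x\in N_G(y)$ with $x\notin X'_F$. Since $y\notin Y_F=N_G(X_F)$ and $y$ is adjacent to $x$, we have $x\notin X_F$; together with $x\notin X'_F$ this says every neighbour of $x$ lies in $Y\setminus Y_{F,2^+}$, i.e.\ is adjacent to at most one vertex of $X_F$. First I would show that $y$ is the \emph{only} neighbour of $x$ not covered by $F$: if $y'\neq y$ were another uncovered neighbour, then $F\cup\{yxy'\}$ is a $Y$-link family with middle set $X_F\cup\{x\}$, so $Y_{F\cup\{yxy'\}}\supseteq Y_F\cup N_G(x)\ni y$, contradicting the maximality $(1)$ of $|Y_F|$. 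Hence $x$ has $d-1\ge 2$ covered neighbours; each of them is a link endpoint, hence lies in $Y_F$, hence (by the previous line) in $Y_{F,1}$, and for such a neighbour $y_i$ its unique neighbour $x_i$ in $X_F$ is forced to be the middle vertex of the link $p_i=y_ix_iz_i$ containing it.

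The heart of the argument is a re-routing near $x$. Fix a covered neighbour $y_1$ of $x$ with link $p_1=y_1x_1z_1$ and put $F'=(F\setminus\{p_1\})\cup\{yxy_1\}$, a $Y$-link family with $X_{F'}=(X_F\setminus\{x_1\})\cup\{x\}$. With $B=N_G(X_F\setminus\{x_1\})$ one has $Y_F=B\cup N_G(x_1)$, $Y_{F'}=B\cup N_G(x)$, so $|Y_{F'}|-|Y_F|=|N_G(x)\setminus B|-|N_G(x_1)\setminus B|$; here $N_G(x_1)\setminus B$ is exactly the set $P_{x_1}$ of ``private'' neighbours of $x_1$ (those whose only $X_F$-neighbour is $x_1$), and $N_G(x)\setminus B$ contains both $y$ and $y_1$. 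Next I would note that any \emph{covered} vertex of $P_{x_1}$ is an endpoint of $p_1$ (its link's middle is forced to be $x_1$), so if $x_1$ has a private neighbour $v\notin\{y_1,z_1\}$ then $v$ is uncovered, and $(F\setminus\{p_1\})\cup\{yxy_1,\,z_1x_1v\}$ is a $Y$-link family with middle set $X_F\cup\{x\}$, again contradicting $(1)$. So $P_{x_1}\subseteq\{y_1,z_1\}$; then the displayed count together with $|Y_{F'}|\le|Y_F|$ forces $P_{x_1}=\{y_1,z_1\}$, $z_1\notin N_G(x)$, and $|Y_{F'}|=|Y_F|$, so that $(2)$ now applies and yields $|X'_{F'}|\le|X'_F|$.

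The remaining step is to contradict $|X'_{F'}|\le|X'_F|$. Since $x\notin X'_F$ and $X_{F'}$ differs from $X_F$ only by replacing $x_1$ with $x$, both $X'_F$ and $X'_{F'}$ lie inside $X-X_{F'}=((X-X_F)\setminus\{x\})\cup\{x_1\}$ and nothing is lost for free. I would show (i) every $w\in X'_F$ remains in $X'_{F'}$: a witness $z_0\in N_G(w)\cap Y_{F,2^+}$ stays in $Y_{F',2^+}$ unless $z_0\sim x_1$ and $|N_G(z_0)\cap X_F|=2$, and this coincidence can be ruled out or avoided by choosing $y_1$ (equivalently, the vertex of $P_{x_1}$ used) suitably; and (ii) $x_1$ itself enters $X'_{F'}$, since its neighbours outside $\{y_1,z_1\}$ all lie in $B$, hence in $Y_{F,2^+}$, and after the right choice at least one of them survives in $Y_{F',2^+}$. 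Then $|X'_{F'}|\ge|X'_F|+1$, the desired contradiction.

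I expect the routine parts to be the counting identity for $|Y_{F'}|-|Y_F|$ and the checks that the re-routed families are genuine vertex-disjoint $Y$-link families. The main obstacle is steps (i)--(ii) above: controlling precisely how membership in $Y_{F,2^+}$, and hence in $X'_F$, changes when one middle vertex is swapped for another, and organising the choices of $p_1$ and of the private neighbour used in the re-routing so that $X'$ strictly grows. That is where essentially all of the combinatorial content of the lemma lives.
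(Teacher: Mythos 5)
Your opening moves are sound and coincide with the first steps of the paper's own proof: showing that $y$ is the only uncovered neighbour of $x$, that each covered neighbour $y_1$ of $x$ lies in $Y_{F,1}$ so that its link $y_1x_1z_1$ has $x_1$ as middle vertex, and that the private neighbourhood of $x_1$ is exactly $\{y_1,z_1\}$ with $z_1\notin N_G(x)$ (this is the paper's assertion that $y'_0\in Y_{F,1}$). The gap is in your steps (i)--(ii), and it is not a deferred routine verification: it is the whole difficulty, and a single swap cannot close it. After replacing $y_1x_1z_1$ by $yxy_1$, a neighbour $w$ of $x_1$ with $w\notin\{y_1,z_1\}$ does lie in $Y_{F,2^+}$, but it survives into $Y_{F',2^+}$ only if $|N_G(w)\cap X_F|\ge 3$, since $w$ cannot be adjacent to $x$ (every neighbour of $x$ meets $X_F$ at most once, so a neighbour of $x$ with $X_F$-count $2$ does not exist). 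If every such $w$ has $|N_G(w)\cap X_F|=2$, then $x_1$ does not enter $X'_{F'}$, and moreover each such $w$ falls out of $Y_{F,2^+}$, possibly destroying the only witnesses of existing members of $X'_F$; then $|X'_{F'}|\le|X'_F|$ and no contradiction with condition (2) arises. Nothing prevents this from happening simultaneously for all $d-1$ admissible choices of the pivot $y_i$, so ``choosing $y_1$ suitably'' is not an available escape.

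The paper resolves exactly this situation by iterating rather than by a cleverer single swap: when the third neighbour $w$ of $x_1$ lies in $Y_{F,2}$, it is shown to be the end of another link, and one builds a $Y$-link sequence $y_0x_0y'_0, y_1x_1y'_1,\ldots$ in which each $y'_j$ is a private neighbour of $x_j$ and each $y_{j+1}$ lies in $Y_{F,2}$; the maximality of $|X'_F|$ is used only to exclude $y_{j+1}\in Y_{F,3^+}$, which is precisely your favourable case in which $x_j$ would enter $X'$. The contradiction then comes from the fact that the sequence can always be extended by a new link, which is impossible in a finite graph. To repair your argument you would need to replace the one-step re-routing by such an unbounded chain (or an equivalent induction), because the obstruction you meet at $x_1$ can recur at $x_2$, $x_3$, and so on, and no bounded amount of local case analysis around $x$ will eliminate it.
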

 \begin{proof}
 Assume to the contrary that $y \in Y -Y_F$ and $x \in N_G(y)-X'_F$.

 A {\em $Y$-link sequence } is a sequence of $Y$-links $\{y_jx_jy'_j \in F: 0 \le j \le i-1 \}$
 such that $y_{j+1} \sim x_j$ for $0 \le j \le i-1$.

 Let  $y_0 \in N_G(x) - \{y\}$.
 Then $y_0$ must be an end vertex of a $Y$-link, say  $y_0x_0y'_0  \in F$, for otherwise,  $F' =F \cup
 \{yxy_0\}$ is a $Y$-link family   with $|Y_{F'}| > |Y_F|$ (as $y \in Y_{F'}-Y_F$), contrary to the choice of $F$.

 Next we show that if there is a $Y$-link sequence  $\{y_jx_jy'_j \in F: 0 \le j \le i-1 \}$ of length $i$ with $y_0 \in N_G(x)$,
 then it can be extended to  a $Y$-link sequence of length $i+1$.
 This implies that  there is an infinite $Y$-link sequence, which is a contradiction, as $G$ is finite.

 Assume $i \ge 1$ and $\{y_jx_jy'_j \in F: 0 \le j \le i-1 \}$ is a $Y$-link sequence, with $y_0 \in N_G(x)$.
 Let $y_{i} \in N_G(x_{i-1}) - \{y_{i-1}, y'_{i-1}\}$ (the vertex $y_i$ exists as $x_{i-1}$ has degree $d \ge 3$).

\begin{claim}
\label{cl0} For $0 \le j \le i-1$,  $y'_j \in Y_{F,1} \ \mbox{\rm and} \ y_{j+1} \in Y_{F,2} $.
\end{claim}
\begin{proof}
 First we show that  $y_i$ is  the end of a $Y$-link in $F$. Otherwise
 $$F'=F-\{y_jx_jy'_j: 0 \le j \le i-1\} \cup \{y_{j+1}x_jy'_j: 0 \le j \le i-1\} \cup \{y_0xy\}$$
 is a $Y$-link family with $|Y_{F'}| > |Y_F|$, contrary to the choice of $F$.

Now we prove that for   $0 \le j \le i-1$,  $y'_j \in Y_{F,1}$.

 Assume to the contrary that for some $0 \le j  \le i-1$,  $y'_j \notin Y_{F,1} $.  Then
  $y'_j \in Y_{F, 2^+}$. Let  $$F' = (F - \{y_lx_ly'_l: 0 \le l \le j\}) \cup \{y'_lx_ly_{l+1}: 0 \le l \le j-1\} \cup \{yxy_0\}.$$
  Then $F'$ is a $Y$-link family with
$Y_{F'}= Y_F \cup \{y\}$, contrary to the choice of $F$.

Next we prove that for $0 \le j \le i-1$,  $  y_{j+1} \in Y_{F,2} $.
First observe that  $y_{j+1}$ is adjacent to at least two vertices of $X_F$, namely $x_j$ and $x_{j+1}$.
So $y_{j+1} \in Y_{F, 2^+}$.

Assume to the contrary that there is an index $0 \le j \le i-1$ such that $y_{j+1} \in Y_{F,3^+}$.
Let
$$F' = (F - \{y_lx_ly'_l: 0 \le l \le j\}) \cup \{y'_lx_ly_{l+1}: 0 \le l \le j-1\} \cup \{yxy_0\}.$$
It is easy to verify that $Y_{F'} = (Y_F - \{y'_j\}) \cup \{y\}$, and $ Y_{F,2^+} =
 Y_{F',2^+}$.
 This implies that  $X'_{F'} = C_{F} \cup \{x_j\}$. So $|Y_{F'}| =
|Y_F|$ and $|X'_{F'}| > |X'_F|$,  contrary to the choice of $F$. This completes the proof of Claim
\ref{cl0}.
\end{proof}

By Claim \ref{cl0},   $y_i \notin \{ y_j, y'_j: 0 \le j \le i-1\}$. We have observed that $y_i$ is the end of a $Y$-link. Let $y_ix_iy'_i$ be the link
containing $y_i$. Then $\{y_jx_jy'_j: 0 \le j \le i\}$ is a  $Y$-link sequence.
This completes the proof of Lemma \ref{lem1}.
 \end{proof}

 \noindent
 {\bf Proof of Lemma \ref{key}}
 Let $F$ be a family of $Y$-links such that for any   $y \in Y -Y_F$, $N_G(y) \subseteq X'_F$.
 Let $G'=(X' \cap Y', E')$ be the subgraph of $G$ obtained  from $G$  by deleting all the vertices in $X$ covered by $F$.
 It suffices to show that $G'$ has a matching $M$ that covers every vertex of $X'$.
 By Hall's Theorem, it suffices to show that for any subset $Z$ of $X'$, $$|N_{G'}(Z)| \ge |Z|.$$
 Let $Z$ be a subset of $X'$ and $W=N_G(Z)$. Let $H$ be the subgraph of $G'$ induced by $Z  \cup W$.
 For $i=1,2,\ldots, d+1$, let $W_i = \{y \in W: d_H(y)=i\}$.

By the maximality of $Y_F$, $N_G(y) \cap N_G(y') = \emptyset$ for distinct $y,y' \in W_{d+1}$. Therefore  $$ |N_H(W_{d+1})| = (d+1) |W_{d+1}|. \eqno(1)$$
By Lemma \ref{lem1},  each vertex in $N_H(W_{d+1})$ is adjacent to a vertex in $W_1 \cup W_2 \cup \ldots \cup W_{(d-1)}$.
As each vertex in $W_1 \cup W_2\cup \ldots \cup W_{(d-1)}$ is adjacent to at most $(d-1)$ vertices in
 $N_H(W_{d+1})$, we have $$(d-1)|W_1 \cup W_2\cup \ldots \cup W_{(d-1)}| \ge |N_H(W_{d+1})|. \eqno(2) $$ (1) and (2) imply that  $$|W_1 \cup W_2\cup \ldots \cup W_{(d-1)}| \ge |W_{d+1}|. \eqno(3)$$
 As each vertex of $Z$ has degree $d$ in $H$, by using (3), we conclude that
 $$d|Z| = |E(H)| = \sum_{y \in W}d_H(y)=\sum_{i=1}^{d+1} i|W_i| \le d|W| + |W_{d+1}| - \sum_{i=1}^{d-1}|W_i| \le d|W|.$$
 Hence $|W|= |N_{G'}(Z)| \ge |Z|$, and $G'$ has a matching $M$ that covers every vertex of $X'$.
 This completes the proof of Lemma \ref{key}.
 \section{Proof of the main result}

This section proves the main result of this paper:

\begin{theorem}
\label{even}
For any  integer $k \ge 1$, all $(2k+2)$-regular graphs are antimagic.
\end{theorem}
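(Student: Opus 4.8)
\medskip
\noindent\textbf{Proof plan.} Hartsfield and Ringel \cite{HR1990} showed that every $2$-regular graph is antimagic, so ``all even-degree regular graphs are antimagic'' follows once Theorem \ref{even} is proved. For Theorem \ref{even} itself, Cranston \cite{Cra2009} proved that every regular bipartite graph of degree at least two is antimagic, so we may assume $G$ is non-bipartite, and we may also assume $G$ is connected (a disconnected $G$ reduces to this case by labelling its components with consecutive blocks of integers, which automatically keeps the vertex-sums of distinct components apart). Put $d=k+1\ge 2$, $n=|V(G)|$ and $m=|E(G)|=dn$. Since every degree of $G$ is even, $G$ has an orientation $D$ with $d^{+}(v)=d^{-}(v)=d$ for every $v$; applying K\"onig's edge-colouring theorem to the bipartite graph on $V^{+}\cup V^{-}$ having an edge $u^{+}v^{-}$ for each arc $u\to v$, we split $D$ into $d$ spanning sub-digraphs $P_{1},\dots,P_{d}$, each a vertex-disjoint union of directed cycles (of length at least three, as $G$ is simple). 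Forgetting orientations, $G=P_{1}\cup\cdots\cup P_{d}$ is a decomposition into $d$ $2$-factors, each carrying a coherent cyclic orientation.

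The scheme is to start from a ``first-attempt'' labeling and then perturb it. Giving $P_{i}$ the block $\{(i-1)n+1,\dots,in\}$, increasingly around each directed cycle, makes the vertex-sum at $v$ equal to a global constant plus $2\sum_{i=1}^{d}j_{i}(v)$ up to cycle-boundary corrections, where $j_{i}(v)$ is the position of $v$ along its cycle in $P_{i}$; thus the sums cluster tightly and most pairs of vertices are already separated, and it remains to remove the surviving collisions by adjusting a small, carefully chosen family of edges. To locate that family we build an auxiliary bipartite graph $B=(X\cup Y,E_{B})$ from $D$ in which $X$ and $Y$ are suitably modified copies of $V(G)$, arranged so that every vertex of $X$ has degree at most $d$ and every vertex of $Y$ has degree at most $d+1$ (the one extra unit of room on the $Y$-side supplies the flexibility needed at the end). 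Lemma \ref{key} then provides a $d$-covering pair $(F,M)$, and, as noted just before that lemma, every component of $M\cup F$ is a single matching edge or a $W$-shape (a $Y$-link $yxy'$ together with a matching edge at each of $y$ and $y'$), with $M\cup F$ covering every degree-$d$ vertex of $X$.

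We then reserve, for the edges of $G$ underlying the components of $M\cup F$, a top block of the largest labels; we assign the remaining small labels to the other edges following the first-attempt pattern (adjusted so the base sums sit in a controlled arrangement); and we distribute the reserved large labels over the single edges and the $W$-shapes. A single matching edge gives one tunable boost to one vertex, while a $W$-shape — a small tree on which three consecutive large labels can be placed — gives three widely separated, individually tunable boosts to its three inner vertices. Hence the covering pair furnishes exactly one adjustable large contribution to every vertex liable to be in a collision, and by choosing these contributions suitably we push all vertex-sums into pairwise disjoint ranges.

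The hard part is verifying that this really yields $n$ distinct vertex-sums. It requires (i) fixing the order in which the small labels are laid down so that the base sums differ coarsely enough for the bounded boosts to be decisive; (ii) ruling out collisions between vertices lying in different components of $M\cup F$, and between covered vertices and the few vertices not covered by $(F,M)$; and (iii) using the degree-$\le d+1$ slack on the $Y$-side to absorb those exceptional vertices and the cycle-boundary and parity effects left over from the first-attempt labeling. I expect (ii) and (iii) to be the real obstacle: they dictate that $B$ be constructed so that the global block structure of the first-attempt labeling is compatible with the local matching/$W$-shape structure delivered by Lemma \ref{key}, and establishing that compatibility is what most of the proof must do.
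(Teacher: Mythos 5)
Your plan diverges from the paper's (which uses a breadth-first level decomposition $L_0,\dots,L_p$ from a root $v^*$ rather than a $2$-factor decomposition), but the more important issue is that it stops exactly where the proof has to begin. The auxiliary bipartite graph $B$ is never constructed: you say $X$ and $Y$ are ``suitably modified copies of $V(G)$'' with the right degree bounds, but the whole point of Lemma \ref{key} is that it is applied to a concrete bipartite graph whose structure ties the covering pair to the labeling scheme --- in the paper this is $G[L_{i-1},L_i]$, where each $L_{i-1}$-vertex has degree at most $2k+1=d$ and each $L_i$-vertex degree at most $2k+2=d+1$ because at least one edge at every non-root vertex goes back toward $v^*$. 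Without specifying $B$, the claim that the matching/$W$-shape structure can be made ``compatible with the global block structure'' is not an argument; you yourself flag items (ii) and (iii) as ``the real obstacle'' and leave them unresolved, and those items are essentially the entire content of Lemma \ref{su} in the paper (the inequalities (4) and (5) and the case analysis over trail types and bad components).

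There is also a concrete flaw in the collision-removal mechanism as described. You assert that the covering pair furnishes ``exactly one adjustable large contribution to every vertex liable to be in a collision'' and that a $W$-shape gives ``three \emph{individually} tunable boosts.'' But every edge label is added to the sums of \emph{both} endpoints, so the boosts are not independent: raising the label on a matching edge $xy$ to separate $x$ from some other vertex simultaneously perturbs $y$, and the three edges of a $W$-shape share their labels among five vertices. With base sums that ``cluster tightly,'' there is no reason a single round of such coupled adjustments terminates with all $n$ sums distinct. The paper escapes this precisely through the level structure: the adjustable edge $\sigma(u)$ of a vertex $u\in L_i$ has its other endpoint in $L_{i-1}$, and inequality (5) guarantees that every $L_{i-1}$-vertex already has a strictly larger partial sum than every $L_i$-vertex, so the perturbation of the upper endpoint can never create a new collision. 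Your plan has no analogue of this one-directional shielding, and supplying one is not a routine verification --- it is the main construction. As it stands the proposal is a program for a proof, not a proof.
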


The general idea of the proof of Theorem \ref{even} is the same as the proof for odd degree regular graphs
(cf. \cite{CLZ, LZ2014}).

Let $G$ be a $(2k+2)$-regular graph. We may assume   $G$ is connected.
We choose a vertex $v^*$ of $G$ and partition $V(G)$ into $L_0 \cup L_1 \cup \ldots \cup L_p$,
where $L_i=\{v \in V(G): d_G(v^*,v)=i\}$. We say an edge $e$ has distance $i$ to $v^*$ if $e$ connects two vertices in $L_i$
or one vertex in $L_i$ with one vertex in $L_{i-1}$.
We label the edges in order of decreasing distance to $v^*$, i.e.,   smaller labels are assigned to edges
further away from $v^*$. For edges of the same distance to $v^*$, the labels need to be carefully assigned.

For the labeling to be antimagic, it suffices to guarantee that for $i \ge 1$,
vertices in $L_i$ have smaller vertex sum than vertices in $L_{i-1}$, and distinct
vertices in $L_i$ have distinct vertex sums.

To make sure vertices in $L_i$ have distinct vertex sums, for each vertex $u \in L_i$,
we choose one edge $\sigma(u)$ connecting $u$ to a vertex in $L_{i-1}$.  We first label
other edges whose distance  to $v^*$ is at least $i$. Then  we label edges in $\{\sigma(u): u \in L_i\}$.
Right before we label these edges,
for each $u \in L_i$, $\sigma(u)$ is the only unlabeled edge incident to $u$. Order the vertices
of $L_i$ as $u_{i,1}, u_{i,2}, \ldots, u_{i, n_i}$ according to their partial vertex-sums, i.e., the partial vertex-sum of
$u_{i,j}$  (just before the labeling of edges $\sigma(u)$ for $u \in L_i$) does not exceed the partial vertex-sum of $u_{i, j+1}$.
Then we label the edges in $\{\sigma(u): u \in L_i\}$ in increasing order, i,e., the label of $\sigma(u_{i,j})$ is less than the label of
$\sigma(u_{i,j+1})$ for $=1,2,\ldots, p-1$. This ensures that vertices in $L_i$ have distinct vertex-sums.

The main difficulties are to choose the labels carefully to ensure that the label sums of vertices in $L_i$ are strictly less than the
vertex sums of vertices in $L_{i-1}$.  For this purpose, we use Lemma \ref{key}   to choose, among edges of the same distance to $v^*$,
some special edges that will receive
relatively large labels.

Below is the detailed argument.

For $i \ge 2$, $G[L_{i-1}, L_i]$ is the   bipartite graph consisting edges between $L_i$ and $L_{i-1}$.
In this bipartite graph, each vertex in $L_{i-1}$ has degree at most $2k+1$, and every vertex in $L_i$ has degree at most $2k+2$.

By Lemma \ref{key}, $G[L_{i-1}, L_i]$ has a $(2k+1)$-covering pair.
Let $(F_i,M_i)$ be a minimum $(2k+1)$-covering pair of $G[L_{i-1}, L_i]$.

Edges in $M_i$ are called  {\em matching edges}, and edges in $F_i$ are called {\em linking edges}.

Define a mapping $\sigma: V(G)-\{v^*\} \to E(G)$ which assigns to each vertex $u \in L_i$ an edge
$\sigma(u)$ of $G[L_{i-1}, L_i]$ incident to $u$: If $u $ is covered by an edge $e \in
M_i$, then let $\sigma(u)=e$, otherwise $\sigma(u)$ is an arbitrary edge of $G[L_{i-1}, L_i]$ incident to $u$.
Observe that if $u \in L_i$ is covered by a linking edge, then $u$ is also covered by a matching edge. So
$\sigma(u)$ is not a linking edge for any $u \in L_i$.

Let $G_{\sigma}[L_{i-1},L_i]$ be
the subgraph of $G[L_{i-1},L_i]$ obtained by deleting all the edges $\{\sigma(u): u \in L_i\}$ and
let $G'_{\sigma}[L_{i-1},L_i]$ be the subgraph of $G_{\sigma}[L_{i-1},L_i]$ obtained by
deleting all the linking edges.

A component  $H$ of $G'_{\sigma}[L_{i-1},L_i]$  is a {\em bad }
if $H$ is $2k$- regular, and
all vertices of $H \cap L_i$ are covered by  linking edges.

An $L_i$-link $uvu'$ in $F_i$ is {\em free}, if one of $u$
and $u'$ does not belong to any bad component of $G'_{\sigma}[L_{i-1},L_i]$.

We choose a minimum covering pair $(F_i,M_i)$ and $\sigma$ so  that  the number of free $L_i$-links is maximum.

\begin{lemma}
\label{clm23}
If $G'_{\sigma}[L_{i-1},L_i]$ has a bad component, then $F_i$ contains  at least $k $ free $L_i$-links.
\end{lemma}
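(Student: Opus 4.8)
The plan is to argue by contradiction, assuming that $G'_{\sigma}[L_{i-1},L_i]$ has a bad component but $F_i$ contains at most $k-1$ free $L_i$-links, and to contradict either the minimality of $|F_i|$ or the maximality of the number of free links.

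I would first record the local structure forced by a bad component $H$. Being $2k$-regular and bipartite, $H$ has parts $H\cap L_{i-1}$ and $H\cap L_i$ of a common size $m$, and simplicity gives $m\ge 2k$. Each $u\in H\cap L_i$ sends all of its $2k+2$ $G$-edges into $L_{i-1}$: $2k$ of them lie in $H$, one is $\sigma(u)$ (hence a matching edge, so $u$ is $M_i$-saturated), and one is the edge of the unique link $\ell(u)\in F_i$ having $u$ as an endpoint; thus exactly $m$ linking edges meet $H\cap L_i$. Next, for any link $\ell$ with an endpoint in $H\cap L_i$, its middle $v\in L_{i-1}$ lies outside $H$ --- otherwise $v$ would carry its $2k$ edges of $H$ together with two further deleted linking edges, forcing $\deg_{G[L_{i-1},L_i]}(v)\ge 2k+2$ --- and $v$ is a $d$-degree vertex, else $(F_i\setminus\{\ell\},M_i)$ is a smaller covering pair. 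The same applies to every bad component, so these middles avoid the union $\mathcal{B}$ of all bad components; also, because $(F_i,M_i)$ is minimum, every $L_i$-neighbour of such a middle $v$ other than the two link endpoints is $M_i$-saturated and lies outside $\mathcal{B}$ (an unsaturated one would give a length-one augmenting path from $v$, and one inside $\mathcal{B}$ would place $v$ in a bad component).

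The heart of the argument is a rerouting. Take a \emph{non-free} link $\ell=uvu'$ meeting $\mathcal{B}$, say with $u\in H\in\mathcal{B}$; its middle $v$ has a further $L_i$-neighbour $u''$ outside $\mathcal{B}$ (there are $2k-1\ge 1$ of them). Replacing $\ell$ by the link $uvu''$, with $M_i$ and $\sigma$ unchanged, keeps $(F_i,M_i)$ a minimum covering pair but returns the edge of $\ell$ at its dropped endpoint to $G'_{\sigma}$, which raises the $G'_{\sigma}$-degree of that endpoint to $2k+1$ and thereby destroys the $2k$-regularity of the bad component containing it; hence the new link $uvu''$ has an endpoint outside the updated $\mathcal{B}$ and is free. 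Checking that this strictly increases the number of free links --- equivalently, that no previously free link is demoted and no new bad component is created --- contradicts the maximality, and iterating yields the stated bound.

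The main obstacle is exactly this last verification, and it is where the constant $k$ enters. The simple reroute above assumes that $u''$ is not already a link endpoint; if every admissible $u''$ is a link endpoint, one must splice the reroute along an alternating chain of links and matching edges, and one must ensure that returning or deleting edges of $G'_{\sigma}$ does not accidentally produce a fresh $2k$-regular, all-$L_i$-covered component. I expect that controlling these chains --- using the $2k$-regularity of $H$, the degree cap $2k+1$ on $L_{i-1}$, and the $2k-1$ available choices of $u''$ --- is what forces at least $k$ (rather than just one) of the links meeting a bad component to remain free.
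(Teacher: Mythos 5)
Your structural preparation is sound and matches the groundwork of the paper's proof: the middle $v$ of any link meeting a bad component lies outside every bad component, has degree exactly $2k+1$ in $G[L_{i-1},L_i]$ by minimality of $(F_i,M_i)$, and its $2k-1$ neighbours other than the two link endpoints avoid the union $\mathcal{B}$ of bad components. But the proof is not complete, and the gap is precisely the case you defer in your last paragraph. Your reroute $uvu'\mapsto uvu''$ needs a neighbour $u''$ of $v$ that is not already a link endpoint; when every neighbour of $v$ is covered by a link you fall back on an unspecified ``alternating chain'' splice and conjecture that controlling such chains is where the constant $k$ comes from. That is not where $k$ comes from, and no chain argument is needed. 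In that case one simply counts: the $2k-1$ neighbours of $v$ other than $u,u'$ all lie in the component of $G'_{\sigma}[L_{i-1},L_i]$ containing $v$ (the edges joining them to $v$ are neither linking edges, since $v$ is the middle of only one link, nor $\sigma$-edges, since $v$ is unmatched while each such neighbour is matched); that component is not bad because $v$ has degree $2k-1<2k$ in it; hence every link covering one of these $2k-1$ neighbours is already free, and since a link covers at most two of them there are at least $\lceil(2k-1)/2\rceil=k$ free links in $F_i$ with no modification at all. You had already proved the fact this needs --- that those neighbours avoid $\mathcal{B}$ --- but did not use it to close the case.

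The other half of your argument also takes on an obligation you never discharge and which the paper avoids. You want the reroute to strictly increase the number of free links, which forces you to check that deleting the new linking edge $u''v$ from $G'_{\sigma}$ does not turn the component of $u''$ into a fresh bad component and demote other links; you explicitly leave this verification open, and it is not automatic. The paper's extremal argument sidesteps monotonicity entirely: when $v$ has a neighbour $w$ not covered by a link, it passes to $F'_i=F_i-uvu'+wvu'$ and directly exhibits at least $k$ free links in the new pair --- namely the links covering the at least $2k-1$ still-link-covered vertices of $H\cap L_i$, all of which now lie in a non-bad component since that component contains $v$ with degree $2k-1$ --- and then invokes the maximality of the number of free links in the chosen pair to transfer the bound back to $F_i$. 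In both cases the constant $k$ arises from a pigeonhole on $2k$ or $2k-1$ link-covered vertices, two per link, not from any analysis of augmenting chains.
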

\begin{proof}
Let $H$ be a bad component of $G'_{\sigma}[L_{i-1},L_i]$. For a vertex $u \in H \cap L_i$, let $u'vu$ be an $L_i$-link in $F_i$.
Then $v \notin H$ because $H$ is $2k$-regular and   $v$ has degree at most $2k-1$ in $G'_{\sigma}[L_{i-1},L_i]$.

If $v$ has a neighbour $w \in L_i$   not covered by an $L_i$-link in $F_i$, then let
$F'_i=F_i-uvu'+wvu'$.
Then $(F'_i, M_i)$ is a minimum covering pair of $G'_{\sigma}[L_{i-1},L_i]$. For this covering pair,
the component of $G'_{\sigma}[L_{i-1},L_i]$ containing $u$ is not
 bad any more.
All $L_i$-links in $F'_i$ covering vertices of $H \cap L_i$ are free links. As $H$ is $2k$-regular, $|H \cap L_i| \ge 2k$, hence there
are at least $k$ free links in $F'_i$. By our choice of $(F_i,M_i)$, we conclude that $F_i$ also has at least $k$ free links.

Assume all neighbours of $v$ are covered by $L_i$-links in $F_i$. Note that $v$ is a $(2k+1)$-vertex in $G[L_{i-1}, L_i]$ (as $(F_i. M_i)$ is a minimum covering pair).
So $v$ has $2k-1$ neighbours other than $u,u'$. All these neighbours and $v$ are in the same component of $G'_{\sigma}[L_{i-1},L_i]$, and this component is not bad (as $v$ has degree $2k-1$ in $G'_{\sigma}[L_{i-1},L_i]$). Therefore
all the $L_i$-links in $F_i$ covering these $2k-1$ neighbours of $v$ are free and $F_i$ has at least $k$ free $L_i$-links.
\end{proof}

For $i=1,2,\ldots, p$, let $n_i=|L_i|$, $a_i=|E(G[L_i])|$, $b_i = |E(G'_{\sigma}[L_{i-1},L_i])|$, and let $c_i$ be the number of {\em linking edges} in $G_{\sigma}[L_{i-1},L_i]$.
Let $d_p=0$ and for $1 \le i \le p-1$,  $d_i= d_{i+1} +  a_{i+1} +n_{i+1} + b_{i+1}+c_{i+1}$.

For integers $a < b$, $[a,b]=\{j: a \le j \le b\}$ denotes the set of  integers between $a$ and $b$.   Let
\begin{eqnarray*}
I_i &=&[d_i+1,  d_i+a_i], \\
J_i &=& [d_i+a_i+1,   d_i+a_i+b_i], \\
K_i &=& [d_i+a_i+b_i+1,   d_i+a_i+b_i+c_i],\\
L_i &=& [d_i+a_i+b_i+c_i+1,   d_i+a_i+b_i+c_i+n_{i}=d_{i-1}].
\end{eqnarray*}
Observe that $I_p,J_p,K_p,L_p,I_{p-1}, J_{p-1}, K_{p-1},L_{p-1} \ldots, I_1,J_1,K_1,L_1$ form a partition of the set
  $[1, |E|]$.
Let
\begin{eqnarray*}
E_i &=& E(G[L_i]), \\
E'_i &=& E(G'_{\sigma}[L_{i-1}, L_i]),\\
E''_i &=& E(F_i),\\
\hat{E}_i &=& \{\sigma(u): u \in L_i\}.
\end{eqnarray*}

The edge sets will be labeled in the following  order: $$E_p, E'_p, E''_p, \hat{E}, E_{p-1}, E'_{p-1}, E''_{p-1}, \hat{E}, \ldots, E_1, E'_1, E''_1,\hat{E}_1,$$
and
\begin{itemize}
\item
edges in $E_i$ will be assigned labels from $I_i$,
\item
edges in  $E'_i$ will be assigned labels from $J_i$,
\item
edges in $E''_i$ will be assigned labels from $K_i$.
\item
and edges in $\hat{E}_i$ will be assigned labels from $L_i$.
\end{itemize}

All labelings in our discussion are assumed to use the desribed label sets for corresponding edges.

The assignment of labels from $I_i$ to edges in $E_i$ is arbitrary.
However, the assignment of labels from $J_i$ to edges in $E'_i$ and the labels from $K_i$ to edges in $E''_i$ are carefully constructed.
The labeling of edges in $\hat{E}$ is not arbitrary but easily defined.

We denote by $f$ the final edge labeling. Recall that $\varphi_f(u) = \sum_{e \in E(u)}f(e)$ is the
sum the labels of the $k$ edges incident to $u$. For $u \ne v^*$, let $$s(u) = \sum_{e \in E(u)
\setminus \{\sigma(u)\}} f(e) = \varphi_f(u) - f(\sigma(u)).$$ Observe that after assigning labels
to  the edges in $E''_i$ (according to the above order), $s(u)$ is determined for each vertex $u
\in L_i$.

\begin{lemma}
\label{su}
The edges in $E'_i \cup E''_i$ can be labeled  in such a way that the following hold:
If  edges in $E'_{i+1},   E''_{i+1}, E_i, \hat{E}_i, E_{i-1}, E'_{i-1}, E''_{i-1}$ are labeled by
labels from $J_{i+1}, K_{i+1}, I_i, L_i, I_{i-1}, J_{i-1}, K_{i-1}$, respectively,
then for any $u \in L_i$,   $$s(u) \le
(2k+1)(d_i+a_i)+(k+1)b_i+c_i+k \eqno(4)$$
and for any $v\in L_{i-1}$, $$s(v) \ge  (2k+1)(d_i+a_i)+(k+1)b_i+c_i+k. \eqno(5)$$
\end{lemma}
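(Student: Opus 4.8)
The plan is to bound $s(u)$ from above for $u\in L_i$ and $s(v)$ from below for $v\in L_{i-1}$, and to make the two bounds meet at the common value $T:=(2k+1)(d_i+a_i)+(k+1)b_i+c_i+k$. The key observation is that by the time we label $E''_i$, every edge incident to a vertex $u\in L_i$ except $\sigma(u)$ has received a label, and these labels lie in the sets $I_j,J_j,K_j$ for $j\ge i$ together with $I_i$; in particular all of them are at most $\max K_i = d_i+a_i+b_i+c_i$, and there are exactly $2k+1$ of them. So trivially $s(u)\le (2k+1)(d_i+a_i+b_i+c_i)$, but this is far too weak — the point of the special construction is that among the $2k+1$ labelled edges at $u$, most of the \emph{large} labels (those in $J_i\cup K_i$) cannot all be concentrated at $u$. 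First I would separate the contribution $s(u)=\Sigma_{\ge i+1}(u)+\Sigma_i(u)$ where $\Sigma_{\ge i+1}(u)$ is the sum of labels on edges from $E_{i+1}\cup\cdots$ (equivalently edges going to $L_{i+1}$ or inside higher levels) — these all lie in $\bigcup_{j\ge i+1}(I_j\cup J_j\cup K_j\cup L_j)\subseteq[1,d_i]$ — and $\Sigma_i(u)$ collects labels from $I_i\cup J_i\cup K_i$. If $u$ has $t$ edges going down to $L_{i-1}$ other than $\sigma(u)$ (so $t-1$ of its down-edges are in $E'_i\cup E''_i$ when $\sigma(u)$ is excluded, or rather: $u$ has $2k+2$ edges total, one is $\sigma(u)$, the rest split between up/across-edges carrying labels $\le d_i$ and down-edges in $E'_i\cup E''_i$), the rough target is to show $\Sigma_i(u)$ cannot exceed roughly $(2k+1)(d_i+a_i)+(\text{controlled terms in }b_i,c_i)$, using that at most one $K_i$-label and a bounded number of $J_i$-labels reach $u$.

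The heart of the argument is the design of the labeling of $E'_i$ (labels from $J_i$) and $E''_i$ (labels from $K_i$). For $E''_i$: the linking edges form the $L_i$-links $u'vu$; each such link contributes two labels, and by Lemma \ref{clm23}, if there is any bad component there are at least $k$ free links, which is exactly what will let the +k slack be absorbed. The idea is to assign the $c_i$ largest available labels to linking edges so that for each vertex $u\in L_i$ at most one incident linking edge gets a label, and the two endpoints of a link receive the ``top half'' only on the side we want; the freeness is used so that when $u$ sits in a bad component (hence all its $E'_i$-edges are already committed and it is $2k$-regular there), its partner link edge can be chosen to bring down the sum. For $E'_i$: we want to assign $J_i$ so that the down-degree of $u$ in $G'_\sigma[L_{i-1},L_i]$ — call it $\delta(u)$ — times the average of $J_i$ roughly accounts for the $(k+1)b_i$ term; concretely each vertex of $L_i$ in a component $H$ of $G'_\sigma$ gets its $E'_i$-labels so that their sum is at most $(\text{top of }J_i)$ per edge on at most the right number of edges, while the $L_{i-1}$-endpoints soak up the rest. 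Here I would exploit that $G'_\sigma[L_{i-1},L_i]$ has the $L_{i-1}$-side of degree $\le 2k$ and the $L_i$-side of degree $\le 2k$, so a near-balanced/greedy assignment of consecutive labels is possible component by component.

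Then the lower bound for $v\in L_{i-1}$: when $\sigma(u)$'s are labelled, $s(v)$ is the sum over $E(v)\setminus\{\sigma(v)\}$; $v$ has $2k+2$ incident edges, one is $\sigma(v)$ (going to $L_{i-2}$), at least one goes down to $L_i$ and carries a label $\ge d_i+a_i+1$ from $J_i\cup K_i$ (or many such), and the remaining edges at $v$ are in $E_{i-1}$ or go to $L_{i-2}$, carrying labels $\ge d_i+1$ but we only need $\ge$ something. The natural bound is $s(v)\ge(\text{sum of smallest }2k+1\text{ labels that could appear at }v)$; since every edge at $v$ except $\sigma(v)$ has distance $\le i-1$ to $v^*$, its label is $> d_i$, i.e. $\ge d_i+1$. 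To get the extra $a_i$-term one notes the down-edges of $v$ into $L_i$ carry \emph{large} labels (in $J_i\cup K_i$, so $\ge d_i+a_i+1$), and $v$ has at least... — this is where I'd have to be careful, because a vertex of $L_{i-1}$ might have only one edge down to $L_i$. The main obstacle, and the place the construction has to be cleverest, is precisely making the upper bound at $u\in L_i$ and the lower bound at $v\in L_{i-1}$ \emph{tight against the same number} $T$: this forces the label assignment on $J_i$ and $K_i$ to be essentially forced (consecutive blocks distributed in a prescribed greedy pattern across components and across the two sides of each link), and the $+k$ correction term is exactly calibrated so that the at-most-one bad-component discrepancy, handled by the $\ge k$ free links from Lemma \ref{clm23}, is neither too large (breaking (5)) nor too small (breaking (4)). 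I expect the bulk of the real work to be a careful component-by-component accounting showing the greedy labeling achieves both inequalities simultaneously, with Hall-type/degree bookkeeping on $b_i$ and the link structure on $c_i$.
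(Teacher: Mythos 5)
Your proposal correctly identifies the target value $T=(2k+1)(d_i+a_i)+(k+1)b_i+c_i+k$, the role of Lemma \ref{clm23}, and the fact that the $+k$ slack is what the free links must absorb, but it stops short of the one idea that actually makes the lemma work, and the substitute heuristic you lean on would fail. You propose to bound $s(u)$ for $u\in L_i$ by arguing that ``at most one $K_i$-label and a bounded number of $J_i$-labels reach $u$.'' That is false in general: a vertex $u\in L_i$ can be incident to up to $2k+1$ edges of $E'_i$, all carrying labels from $J_i$, so no bound on the \emph{number} of large labels at $u$ can give inequality (4). The paper's mechanism is different: it decomposes each component of $G'_{\sigma}[L_{i-1},L_i]$ into edge-disjoint closed and open trails (with odd-degree vertices as trail ends), and labels each trail by alternating between the bottom and the top of a shrinking interval $[s,l]$ whose endpoint sum $s+l=2(d_i+a_i)+b_i+1$ is held constant throughout $J_i$. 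The parity of the alternation is chosen (via the trail's starting vertex and the classification into ${\cal P}_{i,1},{\cal P}_{i,2},{\cal P}_{i,3}$ and the cases 4(i)(A)/(B)/4(ii)) so that the two consecutive trail-edges at any $L_i$-vertex sum to \emph{at most} $s+l$ while those at any $L_{i-1}$-vertex sum to \emph{at least} $s+l$. The proof of (4) and (5) is then a pairing argument on the $2k+1$ edges of $S(v)$ -- $k$ pairs each contributing about $2d_i+2a_i+b_i+1$ plus one leftover edge -- not a count of how many labels from each block land at $v$. Your ``near-balanced greedy assignment component by component'' gestures at this but never states the property the assignment must have (constant pair-sums with opposite-signed deviations at the two sides), nor why trails admitting such an assignment exist, which is exactly where the Eulerian decomposition is needed.

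Two further places where your sketch has real gaps rather than omitted routine work. First, for the lower bound at $v\in L_{i-1}$ you flag yourself that a vertex might have only one edge down to $L_i$; the paper resolves this through the covering pair $(F_i,M_i)$: every full-degree vertex of $L_{i-1}$ is covered by a matching edge (which becomes a $\sigma(u)$-edge labeled from the top block) or by a $Y$-link (two edges of $E''_i$ whose labels sum to $2d_i+2a_i+2b_i+c_i+1$), and this is precisely what supplies the leftover or compensating pair in the exceptional cases. This is where Lemma \ref{key} enters, and your sketch does not use it. Second, the bad-component case: the paper needs that when $u$ lies in a bad component (so every pair at $u$ overshoots by exactly $1$, i.e.\ sums to $2d_i+2a_i+b_i+2$), the single $E''_i$-edge at $u$ has label at most $d_i+a_i+b_i+c_i-k$ (Lemma \ref{bad}), which is what the ordering of free links at the front of $F_i$ guarantees; your description of ``assigning the top half only on the side we want'' does not pin this down. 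As written, the proposal is a plausible plan with the correct constants but without the construction that realizes them.
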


Once Lemma \ref{su} is proved, we extend  $f$  to edges in $\hat{E}_i$ as follows:
We order the vertices in $L_i$ as $v_1, v_2, \ldots, v_q$ in such that that $s(v_j) \le s(v_{j+1})$.
Label the edges  $\hat{E}_i$ in such that way that $f(\sigma(v_{j+1}) =f(\sigma(v_j))+1$. Then  $\varphi_{f}(u) \ne \varphi_{f}(u')$ for
distinct
 vertices   $u, u' \in L_i$ and hence $f$ is an antimagic labeling of $G$. For $u \in L_i$ and $v \in L_{i-1}$,
since $f(\sigma(u)) < f(\sigma(v))$, we have
$\varphi_f(u) < \varphi_f(v)$.

\bigskip
So all it remains is to prove Lemma \ref{su}.
\bigskip

As mentioned above, edges in $E_i$ are assigned labels from $I_i$ in an arbitrary manner.
We describe how the edges in $E'_i\cup E''_i$ are labeled.

We decompose each component of $G'_{\sigma}[L_{i-1}, L_i]$ into the edge disjoint union of open trails and closed trails:
If a component $H$   of $G'_{\sigma}[L_{i-1}, L_i]$ is Eulerian, then $E(H)$ form a closed trail.
Otherwise, $H$ is the edge disjoint union of a family of open trails with end vertices having odd degree in $H$
and  each odd degree  vertex of $H$ is the end of exactly one open trail.

Let ${\cal C}_i$ be the set of closed trails. We divide the open  trails into three families:
\begin{itemize}
\item ${\cal P}_{i,1}$ are open trails in $G'_{\sigma}[L_{i-1}, L_i]$   with both end vertices in $L_{i-1}$.
\item ${\cal P}_{i,2}$ are open trails in $G'_{\sigma}[L_{i-1}, L_i]$  with both end vertices in $L_{i}$.
\item ${\cal P}_{i,3}$ are open trails in $G'_{\sigma}[L_{i-1}, L_i]$  with one end vertex in $L_{i-1}$ and one end
vertex in $L_i$.
\end{itemize}

We label the edges of the closed trails in ${\cal C}_i$ and open trails in ${\cal P}_{i,1} \cup {\cal P}_{i,2}$ one by one, and label the trails
in ${\cal P}_{i,3}$ one pair each time, except that in case $|{\cal P}_{i,3}|$ is odd, then the last trail is labeled by itself.
The label uses a set of  {\em permissible labels}.
Once a label is used,
then it is removed from the set of permissible labels.
The set of permissible labels for each closed trail (or each trail in  ${\cal P}_{i,1} \cup {\cal P}_{i,2}$, or each pair of trails in ${\cal P}_{i,3}$,
or the last trail in ${\cal P}_{i,3}$, in case $|{\cal P}_{i,3}|$ is odd),
is always an interval $[s,l]$ of integers with
$s+l= 2d_i+2a_i+b_i+1$. For simplicity, we write the interval always as $[s,l]$, however, $s$ and $l$ changes for different trails and cycles, but
the sum $s+l=2(d_i+a_i)+b_i+1$ remains unchanged.

\begin{enumerate}
\item  For an open trail
$P=(u_1,u_2,\ldots,u_{2q+1})$ in ${\cal P}_{i,1}$, with $u_1 \in L_{i-1}$,
  we label the edges of $P$ with labels $s,l,s+1,l-1,s+2, \ldots, s+q-1, l-q+1$ in this order.
\item For an open trail
$P=(u_1,u_2,\ldots,u_{2q+1})$ in ${\cal P}_{i,2}$, with $u_1 \in L_{i}$,
we label the edges of $P$ with labels $l,s,l-1,s+1,l-2,s+2, \ldots, l-q+1, s+q-1$ in this order.
\item  For a pair of open trails
$P=(u_1,u_2,\ldots,u_{2q})$ and $P'=(u'_1, u'_2, \ldots, u'_{2q'})$ in ${\cal P}_{i,3}$,
we choose the initial vertex of $P$ and $P'$ in such a way that    $u_{2j}, u'_{2j+1} \in L_{i}$,
and  label the edges of $P \cup P'$ with labels $s,l,s+1,l-1,s+2, \ldots, l-(q+q'-2), s+(q+q'-2)$, in this order (with edges of $P$ in front of edges of $P'$).
If $|{\cal P}_{i,3}|$ is odd, then
  the last trail in ${\cal P}_{i,3}$ is not paired off with another trail in ${\cal P}_{i,3}$.
Assume the trail is $P=(u_1,u_2,\ldots,u_{2q})$.   We label the edges of $P$, using labels
$s,l,s+1,l-1,s+2, \ldots, l-(q-1), s+(q-1)$ in this order.
\item Assume $W=(u_0,u_1,u_2,\ldots,u_{2q}=u_0)$ is a closed trail, which covers
an Eulerian component $H$ of   $G'_{\sigma}[L_{i-1}, L_i]$.
\begin{itemize}
\item[(i)] If $H$ is not bad, then  we can choose $u_0$ such that one of the following holds:
\begin{enumerate}
\item[(A)]  $u_0 \in L_i$ and either  ${\rm deg}_{H}(u_0) \le 2k-1$ or $u_0$ is not covered by an $L_i$-link.
\item[(B)] $u_0 \in L_{i-1}$, ${\rm deg}_{H}(u_0) \le 2k-1$.
\end{enumerate}
If (A) holds, then label
 the edges of $W$ with labels $l, s, l-1, s+1, l-2, s+2 \ldots,  l-q+1, s+q-1,$ in this order, i.e.,
the first edge receives label $l$, the second receives label $s$, etc.

If (B) holds, then
label the edges of $C'$ with labels $s,l,s+1,l-1,s+2, \ldots, s+q-1, l-q+1$ in this order, i.e.,
the first edge receives label $s$, the second receives label $l$, etc.
\item[(ii)] If $H$ is a bad component, then assume  $u_0 \in L_i$, and we label the edges with labels $s, l, s+1, l-1, \ldots, s+q-1, l-q+1$ in this order.
\end{itemize}
\item To label edges in $F_i$, we first order links in $F_i$ as $F_i=\{u_jv_ju'_j: j=1,2, \ldots, |F_i|\}$ in such a way that  free $L_i$-links (if any)   are listed at the beginning.
If $u_jv_ju'_j$ is a free $L_i$-link, then we assume that $u'_j$ is not contained in a bad component.
Label the  edges   in $F_i$ as
$$f(u_jv_j) = d_i+a_i+b_i+j, \\ f(u'_jv_j) = d_i+a_i+b_i+c_i -j+1.$$
\end{enumerate}

Figures below show the labeling of the trails.

\begin{figure}[ht]
\begin{center}
\scalebox{0.36}{\includegraphics{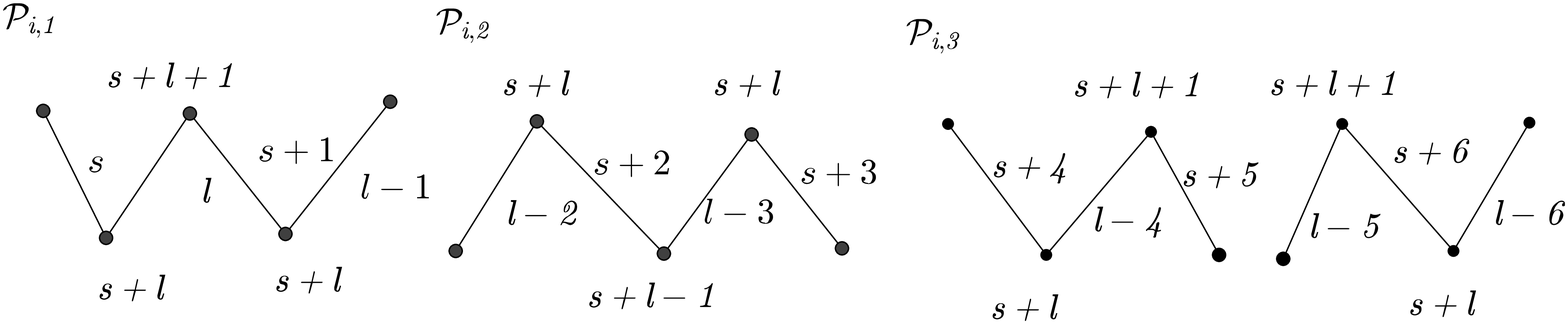}}
\caption{Open trails in ${\cal P}_{i,j}$.}
\end{center}
\end{figure}

\begin{figure}[ht]
\begin{center}
\scalebox{0.7}{\includegraphics{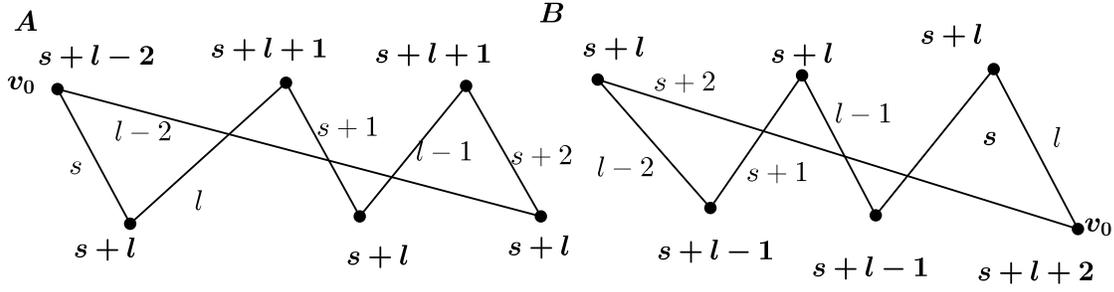}}
\caption{Closed trails as in 4(i)(A) and 4(i)(B).}
\end{center}
\end{figure}

\begin{lemma}
\label{bad}
If $u_jv_j$ is a link edge
and $u_j$ is contained in a bad component, then $f(u_jv_j) \le d_i+a_i+b_i+c_i -k$.
\end{lemma}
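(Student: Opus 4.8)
The plan is to reduce everything to the single inequality $|F_i|\ge k$, which Lemma~\ref{clm23} provides as soon as we know that $G'_{\sigma}[L_{i-1},L_i]$ has a bad component. The hypothesis of the lemma --- that $u_j$ lies in a bad component --- certifies precisely this: there is at least one bad component of $G'_{\sigma}[L_{i-1},L_i]$, so Lemma~\ref{clm23} applies and tells us that $F_i$ contains at least $k$ free $L_i$-links; in particular $|F_i|\ge k$.

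Next I would carry out the bookkeeping that relates $c_i$ to $|F_i|$. Every $L_i$-link is a path of length $2$ and so contributes two edges, and by the observation made when $\sigma$ was defined, $\sigma(u)$ is never a linking edge; hence none of the $2|F_i|$ edges of the links in $F_i$ lies in the set $\{\sigma(u):u\in L_i\}$ that is deleted when forming $G_{\sigma}[L_{i-1},L_i]$. Thus all of them survive, i.e. $c_i=2|F_i|$, and combining this with $|F_i|\ge k$ gives $|F_i|\le c_i-k$.

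Finally, since the links of $F_i$ are enumerated as $u_jv_ju'_j$ for $j=1,2,\dots,|F_i|$, the index satisfies $1\le j\le |F_i|$, so by the definition of the labeling $f(u_jv_j)=d_i+a_i+b_i+j\le d_i+a_i+b_i+|F_i|\le d_i+a_i+b_i+c_i-k$, which is the claimed bound. I do not anticipate a genuine obstacle here: the only points requiring a moment's care are making explicit that the hypothesis is used solely to guarantee the existence of a bad component (so that Lemma~\ref{clm23} may be invoked) and the elementary identity $c_i=2|F_i|$; in particular neither the position of the link in the chosen ordering nor whether it happens to be free plays any role beyond $j\le|F_i|$.
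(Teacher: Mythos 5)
Your reduction to $|F_i|\ge k$ via Lemma~\ref{clm23}, together with the identity $c_i=2|F_i|$, is correct and does give the stated bound for the link edges $u_jv_j$ that receive the \emph{small} labels $d_i+a_i+b_i+j$: indeed $j\le|F_i|=c_i-|F_i|\le c_i-k$. But that is only half of what the lemma has to deliver. In the place where the lemma is applied (the second exceptional case in the proof of Lemma~\ref{su}), the relevant edge is \emph{whichever} link edge is incident to a vertex of a bad component, and such a vertex may well be the endpoint $u'_j$ of its link, whose edge $u'_jv_j$ receives the \emph{large} label $d_i+a_i+b_i+c_i-j+1$. For small $j$ this label is close to $d_i+a_i+b_i+c_i$ and would violate the claimed bound, so the statement is false for those edges unless something rules them out.

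What rules them out is precisely the ordering and the freeness convention that you explicitly declare irrelevant. Since Lemma~\ref{clm23} guarantees at least $k$ free links, these are listed first, and for a free link the endpoint $u'_j$ is by convention chosen outside every bad component; hence if an endpoint $u'_j$ lies in a bad component its link is not free, so $j>k$ and $d_i+a_i+b_i+c_i-j+1\le d_i+a_i+b_i+c_i-k$. The paper's proof consists of exactly these two cases ($j\le k$ forcing the small label on the bad-component side, $j>k$ handling the large label); your proposal proves the first case and asserts the second one away. You should add the case of the edges $u'_jv_j$, using the placement of the free links at the start of the enumeration and the choice of $u'_j$ for free links.
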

\begin{proof}
By Lemma \ref{clm23}, if $G'_{\sigma}[L_{i-1},L_i]$ has a bad component, then $F_i$ contains  at least $k $ free $L_i$-links.
So  for $j=1,2,\ldots, k$, $u_jv_ju'_j$ are free $L_i$-links.
If $j \le k$, then $f(u_jv_j) =d_i+a_i+b_i+j \le d_i+a_i+b_i+c_i -k$ (note that $c_i = 2|F_i|$).
If $j > k$, then $f(u_jv_j) =d_i+a_i+b_i+c_i-j +1 \le d_i+a_i+b_i+c_i -k$.
\end{proof}

\begin{figure}[ht]
\begin{center}
\scalebox{0.6}{\includegraphics{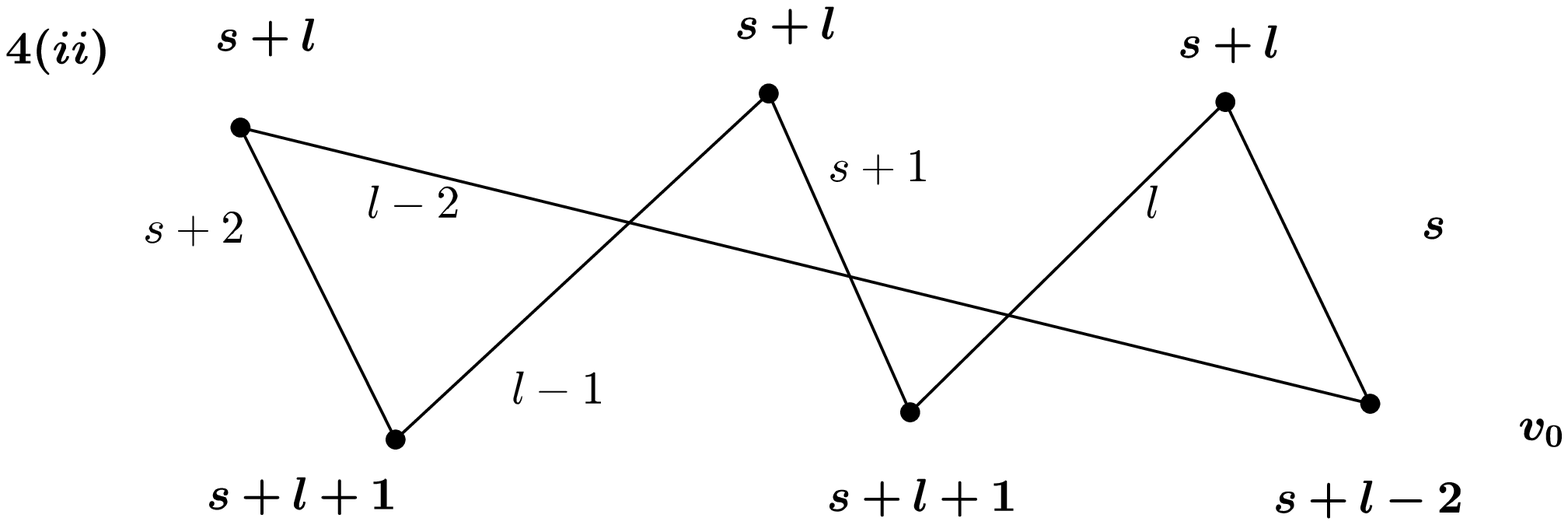}}
\caption{A closed trail as in 4(ii).}
\end{center}
\end{figure}

\bigskip
{\noindent}
{\bf Proof of Lemma \ref{su}}: For each vertex $v$, let $E(v)$ be the edges of $G$ incident to $v$, and let $S(v)=E(v) - \{\sigma(v)\}$.
By definition, $$s(v) = \sum_{e \in S(v)}f(e).$$

First we prove inequality (5).

Assume $v \in L_{i-1}$.  Let
\begin{eqnarray*}
S_1(v) &=& \{e \in S(v) \cap W: W \in {\cal C}_i \cup {\cal P}_{i,1} \cup {\cal P}_{i,2} \cup {\cal P}_{i,3}\},\\
S_2(v) &=& S(v) \cap \left(E_{i-1} \cup E'_{i-1} \cup E''_{i-1} \cup \hat{E}_i\right),\\
S_3(v) &=& S(v) \cap  E''_i.
\end{eqnarray*}
Then $S(v) = S_1(v) \cup S_2(v) \cup S_3(v)$. Note that   $S_3(v)$ might be empty.

We pair off the edges in $S(v)$   as follows:
\begin{itemize}
\item  Two consecutive edges in a trail in $S_1(v)$ form a pair.
\item  If $v$ is the end vertex of a trail $W$, then the end edge of $W$ in $S_1(v)$ is paired with an arbitrary edge in $S_2(v)$ (if $S_2(v)  \ne \emptyset$),
\item The remaining edges of $S_2(v)$ (if any) are paired off arbitrarily.
\item Edges in $S_3(v)$ form a pair (if it is not empty).
\end{itemize}

Note that $|S(v)|=2k+1$ is odd. So exactly one edge is left over. This left over edge is an edge in $S_2(v)$ if $S_2(v) \ne \emptyset$ and is
an end edge of a trail otherwise.

It is straightforward to verify that each pair of edges have label sum at least $2d_i+2a_i+b_i+1$
(refer to Figures 1,2,3 for pairs contained in trails), with one exception:

If $v=u_0 \in W$ as in 4(i)(B), then the two consecutive edges of $W$
incident to $v$ (i.e., the first and the last edges of $W$) have label sum   $s+l-q+1 \ge  2d_i+2a_i +1$.

Assume first it is not the exception case.
If the left over edge is in $S_2(v)$, then its label is at least
$d_i+a_i+b_i+c_i+1$. So $$s(v) \ge k(2d_i+2a_i+b_i+1)+d_i+a_i+b_i+c_i+1,$$
hence inequality (5) holds.

If the left over edge is in $S_1(v)$, then $S_3(v) \ne \emptyset$ (because $v$ is covered by $F_i \cup M_i$). The pair in $S_3(v)$ have label sum $2d_i+2a_i+2b_i+c_i+1$.
Hence the left over edge together with edges in $S_3(v)$ have label sum at least $3d_i+3a_i+2b_i+c_i+2$. So
$$s(v) \ge (k-1)(2d_i+2a_i+b_i+1)+3d_i+3a_i+2b_i+c_i+2,$$
again inequality (5) holds.

In the exceptional case, since $d_H(v) \le 2k-1$,  $|S_3(v) \cup S_2(v)| \ge 2$. Hence there is a pair whose label sum is at least $2d_i+2a_i+2b_i+c_i+1$. This compensate the exceptional pair,
and hence inequality (5) still holds.


Next we prove inequality (4).

Assume $v \in L_i$. Let
\begin{eqnarray*}
S_1(v) &=& \{e \in S(v) \cap W: W \in {\cal C}_i \cup {\cal P}_{i,1} \cup {\cal P}_{i,2} \cup {\cal P}_{i,3}\},\\
S_2(v) &=& S(v) \cap \left(E_{i} \cup E'_{i+1} \cup E''_{i+1} \cup \hat{E}_{i+1}\right),\\
S_3(v) &=& S(v) \cap  E''_i.
\end{eqnarray*}
Then $S(v) = S_1(v) \cup S_2(v) \cup S_3(v)$.
Note that   $|S_3(v)| \le 1$.

Then we pair off the edges in $S(v)$   as follows:
\begin{itemize}
\item  Two consecutive edges in a trail in $S_1(v)$ form a pair.
\item  If $v$ is the end vertex of a trail $W$, then the end edge of $W$ in $S_1(v)$ is paired with an arbitrary edge in $S_2(v)$ (if $S_2(v)  \ne \emptyset$),
\item The remaining edges of $S_2(v)$ (if any) are paired off arbitrarily.
\end{itemize}

Again, exactly one edge is   left over. If $S_3(v) \ne \emptyset$, then the left over edge is the edge in $S_3(v)$.

It is straightforward to verify that each pair of edges have label sum at most $2d_i+2a_i+b_i+1$,
with two exception:
\begin{itemize}
\item   If $v=u_0$ in the closed trail $W$ as in 4(i)(A), then the two consecutive edges of $W$
incident to $v$ (i.e., the first and the last edges of $W$) have label sum   $s+l+q-1 \le  2d_i+2a_i+2b_i $.
\item If $v \in W$ and $W$ form a bad component, then   each pair of edges in $S_1(v)$ have label sum $2d_i+2a_i+b_i+2$.
\end{itemize}

Assume it is not an exceptional case. As the left over edge has label at most $d_i+a_i+b_i+c_i$, $$s(v) \le k( 2d_i+2a_i+b_i +1)+d_i+a_i+b_i+c_i,$$
hence inequality (4) holds.

In the first exceptional case, $S_2(v)$ contains a pair whose label sum is at most $2d_i+2a_i$.
 This compensate the exceptional pair,
and hence inequality (4) still holds.

In the second exceptional case, by definition $S_3(v) \ne \emptyset$ and by Lemma \ref{bad}, the label of the edge in $S_3(v)$ is at most $d_i+a_i+b_i+c_i-k$.
So $$s(v) \le k( 2d_i+2a_i+2b_i +2)+d_i+a_i+b_i+c_i-k,$$
and inequality (4) holds.

This completes the proof of Lemma \ref{su}, and hence the proof of Theorem \ref{even}.

\end{document}